\newtheorem{theorem}{Theorem}[section]
\newtheorem{corollary}[theorem]{Corollary}
\newtheorem{lemma}[theorem]{Lemma}
\newtheorem{proposition}[theorem]{Proposition}
\theoremstyle{definition}
\newtheorem{definition}[theorem]{Definition}
\newtheorem{example}[theorem]{Example}
\theoremstyle{remark}
\newtheorem{remark}[theorem]{Remark}
\numberwithin{equation}{section}
\newcommand{\PP}{\ensuremath{\mathbb{P}}}
\newcommand{\RR}{\ensuremath{\mathbb{R}}}
\newcommand{\ZZ}{\ensuremath{\mathbb{Z}}}
\newcommand{\cF}{\ensuremath{\mathcal{F}}}
\newcommand{\cP}{\ensuremath{\mathcal{P}}}
\DeclareMathOperator{\Conv}{Conv}
\DeclareMathOperator{\Hom}{Hom}
\DeclareMathOperator{\rk}{rk}
\newcommand{\abs}[1]{\left|#1\right|}
\newcommand{\set}[1]{\left\{#1\right\}}
\newcommand{\sumfrac}[2]{\genfrac{}{}{0pt}{1}{#1}{#2}}
\begin{document}

\title[Hodge Numbers of CICY Threefolds in Toric Varieties]{
Closed Form Expressions for Hodge Numbers\\
of Complete Intersection Calabi-Yau Threefolds\\
in Toric Varieties}

\author{Charles F. Doran}
\address{
Department of Mathematical and Statistical Sciences\\
632 CAB, University of Alberta\\
Edmonton, Alberta T6G 2G1\\
Canada}
\email{doran@math.ualberta.ca}
\thanks{The first author was supported in part by NSF Grant No. DMS-0821725 and NSERC-SAPIN Grant \#371661}

\author{Andrey Y. Novoseltsev}
\address{
Department of Mathematical and Statistical Sciences\\
632 CAB, University of Alberta\\
Edmonton, Alberta T6G 2G1\\
Canada}
\email{novoseltsev@math.ualberta.ca}

\subjclass[2010]{Primary 14J32}

\date{October 4, 2009}

\begin{abstract}
We use Batyrev-Borisov's formula for the generating function of stringy Hodge numbers of Calabi-Yau varieties realized as complete intersections in toric varieties in order to get closed form expressions for Hodge numbers of Calabi-Yau threefolds in five-dimensional ambient spaces. These expressions involve counts of lattice points on faces of associated Cayley polytopes. Using the same techniques, similar expressions may be obtained for higher dimensional varieties realized as complete intersections of two hypersurfaces.\footnotemark
\end{abstract}
\footnotetext{In fact, at the time of publication of this article authors have obtained such formulas, they will be communicated in a subsequent work.}

\maketitle

\section{Introduction}

In~\cite{Batyrev1994} Batyrev obtained combinatorial formulas for Hodge numbers $h^{1,1}(X)$ and $h^{n-1,1}(X)$ of an $n$-dimensional Calabi-Yau variety $X$ arising as a hypersurface in a toric variety associated to a reflexive polytope. It is immediate from these formulas that $h^{1,1}(X)=h^{n-1,1}(X^\circ)$, where $X^\circ$ is Batyrev's mirror of $X$, and this equality suffices to show that mirror symmetry holds on the level of Hodge numbers for Calabi-Yau 3-folds. However, it is also important to consider higher dimensional Calabi-Yau varieties including singular ones.

Batyrev and Dais, motivated by ``physicists Hodge numbers'', introduced \emph{string-theoretic Hodge numbers}~\cite{BatyrevDais1996} for a certain class of singular varieties. The string-theoretic Hodge numbers coincide with the regular ones for smooth varieties and with regular Hodge numbers of a crepant desingularization if it exists. Later Batyrev also introduced \emph{stringy Hodge numbers}~\cite{Batyrev1998} for a different class of singular varieties. While stringy and string-theoretic Hodge numbers are not the same, they do agree for the varieties we will be dealing with in this paper, see~\cite{BorisovMavlyutov2003} for further details on relations between them.

Batyrev and Borisov were able to obtain a formula for the generating function of string-theoretic Hodge numbers in the case of complete intersections in toric varieties and show that this function has properties corresponding to mirror symmetry~\cite{BatyrevBorisov1996a}. While their formula can be used in practice for computing Hodge numbers (as it is done in software PALP~\cite{PALP}), it is recursive, takes significant time even on computers, and does not provide much qualitative information on particular Hodge numbers. This work was motivated by the desire to obtain, for complete intersections, formulas similar to those for hypersurfaces.

We were able to accomplish this goal in the case of two hypersurfaces intersecting in a five dimensional toric variety, see Theorem~\ref{thm:h11cigeneric} for arbitrary nef partitions and a simplified expression in Theorem~\ref{thm:h11cispecial} for the indecomposable ones. The algorithm allows one to get expressions for $h^{1,1}$ for the intersection of two hypersurfaces in a higher dimensional ambient space as  well.

\textbf{Acknowledgements.}
We would like to thank Victor Batyrev, Maximilian Kreuzer, Anvar Mavlyutov, John Morgan, and Raman Sanyal for inspiring discussions and references. We are grateful to our referee for his or her thorough review of our paper and pointing out quite a few possible improvements as well as typos.

It was also very beneficial for this project to be able to experiment with numerous  examples using  \texttt{lattice\_polytope} module~\cite{Novoseltsev2009} of the software system Sage~\cite{Sage}, which provides convenient access to PALP~\cite{PALP} as one of its features.

\section{Generating functions for stringy Hodge numbers} \label{sec:notation}

In this section we fix the notation, define a nef partition and the generating function for the stringy Hodge numbers of the associated variety. The exposition is based on~\cite{BatyrevBorisov1996a, BatyrevNill2007}, where one can also find further properties of the objects in question (the notation there is slightly different, as those authors work with faces of cones, not of supporting polytopes). Since this paper is mostly combinatorial, we will use the generating function to define the stringy Hodge numbers. 

Let $N\simeq\ZZ^n$ be a lattice of dimension $n$, $M=\Hom(N,\ZZ)$ be its dual lattice, $N_\RR=N\otimes_\ZZ\RR$ and $M_\RR=M\otimes_\ZZ\RR$ be the vector spaces spanned by these lattices. Let $\Delta\subset N_\RR$ be a reflexive polytope (a bounded convex polytope of dimension $n$ with vertices at lattice points and all facets having integral distance $1$ from the origin) and $\Delta^\circ\subset M_\RR$ be its polar (the convex hull of inner normals of facets of~$\Delta$, normalized to be primitive integral vectors~--- reflexivity of~$\Delta$ implies that $\Delta^\circ$ is also reflexive, hence the name). Let $\PP_\Delta$ be the toric variety corresponding to the fan spanned by faces of (a triangulation of the boundary of)  $\Delta$, see, for example,~\cite{CoxKatz1999} for details on constructing~$\PP_\Delta$.

Let the vertex set $V = V(\Delta)$ be partitioned into a disjoint union of subsets
\begin{gather*}
V = V_1 \coprod V_2 \coprod \ldots \coprod V_r\,,
\end{gather*}
with corresponding polytopes $\Delta_i = \Conv(V_i, 0)$. This decomposition determines a nef partition if the Minkowski sum $\Delta_1+\dots+\Delta_r$ is also a reflexive polytope which we will denote by  $\nabla^\circ$, for $\nabla\subset M_\RR$.

We say that this nef partition is \emph{indecomposable}, if the Minkowski sum of any proper subset of $\set{\Delta_i}_{i=1}^r$ is not a reflexive polytope in the generated sublattice. Decomposable nef partitions correspond to products of Calabi-Yau varieties, presented as complete intersections of smaller numbers of hypersurfaces in toric varieties of smaller dimensions.

The associated {\em Cayley polytope} $P^* \subset N_\RR \times \RR^r$ of dimension $n+r-1$ is given by
\begin{gather*}
P^* = \Conv(\Delta_1 \times e_1, \Delta_2 \times e_2, \ldots, \Delta_r \times e_r)\,,
\end{gather*}
where $\{e_i\}_{i=1}^r$ is the standard basis for $\ZZ^r \subset \RR^r$.  The Cayley polytope supports the {\em Cayley cone} $C^* \subset N_\RR \times \RR^r$ of dimension $n+r$. It is a reflexive Gorenstein cone of index $r$ ($rP^*$ is a reflexive polytope) with \emph{dual Cayley cone} $C\subset M_\RR\times \RR^r$ supported on the \emph{dual Cayley polytope} $P$. 

The intersections of $P$ with affine subspaces given by intersections of hyperplanes $x_i = 1,\ x_j=0$ for a fixed $i\in\{n+1,\dots,n+r\}$ and all $j\in\set{n+1,\dots,n+r}$, $j\neq i$, are polytopes $\nabla_1,\dots,\nabla_r$ corresponding to the dual nef partition such that $\nabla=\Conv(\nabla_1,\dots,\nabla_r)$ and $\Delta^\circ=\nabla_1+\dots+\nabla_r$.
These polytopes determine equations of hypersurfaces in $\PP_\Delta$ and their intersection is a (possibly decomposable and/or singular) Calabi-Yau variety $X$ of dimension $n-r$.

Faces of the cone $C$ with the inclusion relation form an {\em Eulerian poset} with
the minimal element the vertex at the origin and the maximal element $C$ itself.
It is convenient to use faces of $P$ to index elements of this poset, with $\emptyset$ and $P$ representing the vertex of $C$ and $C$ itself respectively. If $x$ is a face of $P$, we will denote by $x^\vee$ the dual face of $P^*$.

For any Eulerian poset $\cP$ with the minimum element $\hat{0}$ and the maximum element~$\hat{1}$, if $x,y\in\cP$ with $x\leqslant y$ we will use notation $\rk x$ for the rank of $x$, the length of the longest chain of element between $\hat{0}$ and $x$, $[x,y]=\set{z\in\cP : x\leqslant z\leqslant y}$ for the subposet of elements between $x$ and $y$ inclusively, and $d\cP=\rk\hat{1}$ for the rank of the poset.

If $x$ and $y$ are faces of $P$ with $x\subset y$, then $d[x,y]=\dim y-\dim x$, $\rk x=\dim x + 1$, and it is natural to define here $\dim\emptyset = -1$, since dimensions of faces of $P$ are less by one than the dimensions of corresponding faces of $C$.

\begin{definition}
Let $\cP$ be an Eulerian poset of rank $d$ with the minimal element $\hat{0}$ and the maximal one $\hat{1}$. For $d=0$ let $G_\cP=H_\cP=B_\cP=1$. For $d>0$ define polynomials $G_\cP,\ H_\cP(t)\in\ZZ[t]$ and $B_\cP(u,v)\in\ZZ[u,v]$ recursively by
\begin{align*}
H_\cP(t) &= \sum_{\hat{0}<x\leqslant\hat{1}} (t-1)^{\rk{x}-1} G_{[x,\hat{1}]}(t),\\
G_\cP(t) &= \tau_{<d/2} (1-t) H_\cP(t),
\end{align*}
where
\begin{align*}
\tau_{<d/2}\sum_{k=0}^\infty a_k t^k= \sum_{0\leqslant m<d/2} a_k t^k
\end{align*}
is the truncation operator, and
\begin{align*}
\sum_{\hat{0}\leqslant x\leqslant\hat{1}} B_{[\hat{0},x]}(u,v) u^{d-\rk x} G_{[x,\hat{1}]}(u^{-1}v)=G_\cP(uv).
\end{align*}
\end{definition}

\begin{proposition} \label{prop:Bproperties}
The $B_\cP$ polynomial defined above has the following properties:
\begin{enumerate}
\item The degree of $B_\cP(u,v)$ in $v$ is (strictly) less than $d\cP/2$.
\item If $d\cP\leqslant 2$, then $B_\cP(u,v)=(1-u)^{d\cP}$.
\item If $\cP$ is Eulerian poset associated to faces of a polygon with $k$ vertices/edges, then $d\cP=3$ and $B_\cP(u,v)=1+[k-(k-3)v](u^2-u)-u^3$.
\end{enumerate}
\end{proposition}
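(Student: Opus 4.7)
The plan is to prove all three parts from the defining identity, which I first rewrite by isolating the $x=\hat{1}$ summand (where $B_{[\hat{0},\hat{1}]}=B_\cP$, $u^{d-\rk\hat{1}}=1$, and $G_{[\hat{1},\hat{1}]}=1$) to obtain
\begin{equation*}
B_\cP(u,v)=G_\cP(uv)-\sum_{\hat{0}\leqslant x<\hat{1}}B_{[\hat{0},x]}(u,v)\,u^{d-\rk x}\,G_{[x,\hat{1}]}(u^{-1}v).
\end{equation*}
This closed form expresses $B_\cP$ in terms of $G_\cP$ and strictly smaller Eulerian intervals, and will drive all three arguments.

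For part (1) I would argue by induction on $d=d\cP$, with the base $d\leqslant 1$ handled by direct computation. For each $x$ with $\hat{0}<x<\hat{1}$ the inductive hypothesis gives $\deg_v B_{[\hat{0},x]}<\rk x/2$, while for $x=\hat{0}$ the factor $B_{[\hat{0},\hat{0}]}=1$ contributes no $v$-degree and $u^{d} G_\cP(u^{-1}v)$ carries only the $v$-degree of $G_\cP$, which is already less than $d/2$. Since $G_{[x,\hat{1}]}(t)$ has $t$-degree $<(d-\rk x)/2$ by the truncation in the definition of $G$, the polynomial $G_{[x,\hat{1}]}(u^{-1}v)$ has $v$-degree $<(d-\rk x)/2$, and the two bounds add to $d/2$. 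Because $G_\cP(uv)$ also has $v$-degree $<d/2$, the claim follows.

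For part (2), the cases $d=0$ and $d=1$ are immediate from the recursion together with the rank-$0$ conventions. For $d=2$ the Eulerian property forces exactly two atoms $a_1,a_2$; a short computation yields $H_\cP(t)=t+1$ and $G_\cP(t)=1$, and feeding $B_{[\hat{0},a_i]}=1-u$ (the rank-$1$ case just verified) into the recursion produces $B_\cP=1-u^2-2u(1-u)=(1-u)^2$.

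Part (3) is then a direct calculation. For the face poset of a $k$-gon one computes $H_\cP(t)=t^2+(k-2)t+1$ and hence $G_\cP(t)=1+(k-3)t$, and notes that the rank-$2$ interval above any vertex and each rank-$1$ interval above an edge both have $G$-polynomial equal to $1$. Substituting these together with $B_{[\hat{0},v]}=1-u$ and $B_{[\hat{0},e]}=(1-u)^2$ from part~(2) into the recursion and collecting terms in $u$ and $v$ produces the stated formula. The only mildly delicate step is the degree bookkeeping in part~(1); parts~(2) and~(3) then reduce to routine arithmetic.
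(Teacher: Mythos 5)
Your derivation is correct, but note that the paper does not actually prove this proposition: its ``proof'' is a citation to Batyrev--Borisov (Examples 2.8, 2.9 and Proposition 2.10 of their paper on mirror duality and string-theoretic Hodge numbers), and the arguments you give are in substance the ones found there. Your rearrangement of the defining identity to isolate $B_\cP$ is the right starting point, since the $x=\hat{1}$ summand is exactly $B_\cP(u,v)$. In part (1) the induction closes because $\deg_v B_{[\hat{0},x]}<\rk x/2$ and $\deg_v G_{[x,\hat{1}]}(u^{-1}v)<(d-\rk x)/2$ add to a strict bound of $d/2$, matching the bound for $G_\cP(uv)$; the only caveat is the degenerate case $d\cP=0$, where $B_\cP=1$ has $v$-degree $0$, so the strict inequality should be read as applying for $d\cP\geqslant 1$ (your base case $d\leqslant 1$ silently absorbs this). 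Your computations in parts (2) and (3) check out: the Eulerian condition forces exactly two atoms in rank $2$, giving $H_\cP(t)=t+1$, $G_\cP=1$, $B_\cP=(1-u)^2$; and for the $k$-gon one indeed gets $H_\cP(t)=t^2+(k-2)t+1$, $G_\cP(t)=1+(k-3)t$, and substituting $B=1-u$ over vertices and $B=(1-u)^2$ over edges into the recursion yields $1+[k-(k-3)v](u^2-u)-u^3$ after collecting terms. So your proposal supplies a complete self-contained proof where the paper merely defers to the literature; what it buys is independence from the reference, at the cost of reproducing material the authors deliberately chose not to repeat.
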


\begin{proof}
See~\cite{BatyrevBorisov1996a}, Examples 2.8, 2.9, and Proposition 2.10.
\end{proof}

\begin{definition}
Let $\cF\in N$ be a $d$-dimensional lattice polytope (or a $d$-dimensional face of a lattice polytope). Let $\ell(\cF)=\abs{\cF\cap N}$ be the number of lattice points inside~$\cF$. Let $\ell^*(\cF)$ be the number of points in the relative interior of $\cF$. (For a point both $\ell$ and $\ell^*$ are equal to 1.) Define functions $S_\cF$ and $T_\cF$ by
\begin{align*}
S_\cF(t)&=(1-t)^{d+1}\sum_{k=0}^\infty \ell(k\cF) t^k,\\
T_\cF(t)&=(1-t)^{d+1}\sum_{k=1}^\infty \ell^*(k\cF) t^k.
\end{align*}
We also set $S_\emptyset =1$.
\end{definition}

\begin{proposition} \label{prop:STproperties}
For $\cF\neq\emptyset$ the functions $S_\cF$ and $T_\cF$ defined above have the following properties:
\begin{enumerate}
\item $S_\cF(t)=t^{1+d} T_\cF(t^{-1})$.
\item $S_\cF(t)=1+[\ell(\cF)-d-1]t+\text{higher order terms}$.
\item $T_\cF(t)=\ell^*(\cF)t+[\ell^*(2\cdot\cF)-(d+1)\ell^*(\cF)]t^2+\text{higher order terms}$.
\item $S_\cF$ is a polynomial of degree at most $d$.
\item $T_\cF$ is a polynomial of degree exactly $d+1$.
\item $S_\cF$ has degree $d-r+1$ and $S_\cF(t)=t^{d-r+1} S_\cF(t^{-1})$ if  and only if $\cF$ is a Gorenstein polytope of index $r$.
\end{enumerate}
\end{proposition}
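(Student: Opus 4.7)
The plan is to derive every property from classical Ehrhart theory together with Ehrhart--Macdonald reciprocity and Hibi's characterization of Gorenstein polytopes. Up to normalization by $(1-t)^{d+1}$, $S_\cF$ and $T_\cF$ are the numerators of the Ehrhart series $\sum_{k\geqslant 0}\ell(k\cF)t^k$ and the interior Ehrhart series $\sum_{k\geqslant 1}\ell^*(k\cF)t^k$ of $\cF$, so what we need are standard statements about $h^*$-polynomials.

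I would first dispatch (2) and (3) by expanding $(1-t)^{d+1}$ as a finite binomial sum and reading off the first two coefficients: $\ell(0\cdot\cF)=1$ and $\ell(\cF)$ give $S_\cF(t)=1+[\ell(\cF)-(d+1)]t+\cdots$, while $\ell^*(0\cdot\cF)=0$, together with $\ell^*(\cF)$ and $\ell^*(2\cF)$, yields the stated expansion of $T_\cF$. For (1) I would invoke Ehrhart--Macdonald reciprocity $\ell^*(k\cF)=(-1)^d\ell(-k\cF)$, viewing $k\mapsto\ell(k\cF)$ as the polynomial continuation of the lattice point counting function; substituting into the definition of $T_\cF(t^{-1})$, using the identity $(1-t^{-1})^{d+1}=(-1)^{d+1}t^{-(d+1)}(1-t)^{d+1}$, and reindexing the summation yields exactly $t^{d+1}T_\cF(t^{-1})=S_\cF(t)$.

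Property (4) --- that $S_\cF$ is a polynomial of degree at most $d$ --- is immediate from Ehrhart's theorem (that $k\mapsto\ell(k\cF)$ is a polynomial of degree $d$) combined with the standard identity $\sum_{k\geqslant 0}L(k)t^k=P(t)/(1-t)^{d+1}$ for any polynomial $L$ of degree $d$, where $\deg P\leqslant d$. Property (5) then follows at once from (1), (2), and (4): the reciprocity gives $T_\cF(t)=t^{d+1}S_\cF(t^{-1})$, so $T_\cF$ is a polynomial, and since the constant term of $S_\cF$ equals $1$ by (2), the coefficient of $t^{d+1}$ in $T_\cF$ is $1$, in particular nonzero. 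Finally, (6) is Hibi's palindromicity criterion: a lattice polytope is Gorenstein of index $r$ precisely when its $h^*$-polynomial satisfies $S_\cF(t)=t^{d-r+1}S_\cF(t^{-1})$ with $\deg S_\cF=d-r+1$. The deep inputs are Ehrhart's theorem, Ehrhart--Macdonald reciprocity, and Hibi's criterion; I expect the main place requiring care to be the bookkeeping with signs and indices when deriving (1), while (6) is effectively a citation.
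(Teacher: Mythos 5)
Your proposal is correct and is essentially the paper's argument: the paper treats (2) and (3) as immediate from the definition, deduces (4) and (5) from the earlier items, and outsources (1) and (6) to the literature (Batyrev--Borisov, Prop.~3.6 and Batyrev--Nill, Remark~2.15), whose content is exactly the Ehrhart--Macdonald reciprocity and Hibi-type Gorenstein criterion you invoke. The only cosmetic difference is that you prove (4) directly from Ehrhart's theorem rather than deriving it from (1)--(3), which is harmless.
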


\begin{proof}
For 1 see~\cite{BatyrevBorisov1996a}, Proposition~3.6 and references there. The next two properties are immediate from the definition.  Then 4 and 5 follow from 1--3. For 6 see~\cite{BatyrevNill2007}, Remark~2.15 and references there.
\end{proof}

\begin{definition}
The generating function associated to the  dual Cayley cone $C$ of a nef partition is
\begin{align*}
E_C (u,v) = \frac{1}{(uv)^r}\sum_{\emptyset\leqslant x\leqslant y\leqslant P}
(-1)^{1+\dim x} u^{1+\dim y} S_x\left(\frac{v}{u}\right) S_{y^\vee}(uv) B_{[x,y]}\left(u^{-1},v\right),
\end{align*}
and its coefficients are the stringy Hodge numbers of the Calabi-Yau variety $X$ up to a sign:
\begin{align*}
E_C (u,v) = \sum_{p,q} (-1)^{p+q} h^{p,q}(X) u^p v^q\,.
\end{align*}
\end{definition}

\begin{remark}
The formula above is taken from~\cite{KRS2003}, the original one in~\cite{BatyrevBorisov1996a} is less convenient for actual computations since it includes infinite sums. A similar formula is also given in~\cite{BatyrevNill2007} (line~11 on page~57), but there is a typo~--- the posets of $B$-polynomials must be dualized.
\end{remark}

\begin{remark}
It is not obvious from the expression for $E_C$ that it is a polynomial of degree $2(n-r)$, although this is so for $C$ coming from a nef partition. On the other hand, the definition of $E_C$ makes sense for any $(n+r)$-dimensional Gorenstein cone of index $r$ and it is conjectured that it is always such a polynomial~\cite{BatyrevNill2007}.
\end{remark}

\section{The hypersurface case}

In this section we will derive a formula for $h^{1,1}$ of a hypersurface Calabi-Yau threefold in the four-dimensional toric variety ${\mathbb{P}}_\Delta$ using the generating function. While this formula can be obtained by other means, \cite{Batyrev1994}, it will serve as motivation and demonstration of techniques that will be used for nef partitions in the next section.

\begin{theorem} Let $\Delta$ be a four-dimensional reflexive polytope. Let $X\subset{\mathbb{P}}_\Delta$ be a generic anticanonical Calabi-Yau hypersurface. Then
\begin{align} \label{hyper:h11}
h^{1,1}(X)
&=\ell(\Delta)-5
-\sum_{\dim y=0}\ell^*(y^\vee)
+\sum_{\dim y=1} \ell^*(y)\cdot \ell^*(y^\vee),
\end{align}
where each sum runs over the faces of $\Delta^\circ$ of the indicated dimensions.
\end{theorem}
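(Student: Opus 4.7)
The plan is to apply the definition of $E_C(u,v)$ from Section~\ref{sec:notation} in the hypersurface setting, where $r = 1$, and extract the coefficient of $uv$. In this case the Cayley polytope $P^*$ coincides with $\Delta$ and its dual $P$ with $\Delta^\circ$, both of dimension $n = 4$; the double sum runs over pairs of faces $\emptyset \le x \le y \le \Delta^\circ$, and $y^\vee$ denotes the dual face of $\Delta$. Since the coefficient of $u^{p} v^{q}$ in $E_C$ equals $(-1)^{p+q} h^{p,q}(X)$, and $E_C$ carries the prefactor $1/(uv)$, the task reduces to extracting the coefficient of $u^2 v^2$ from
\begin{equation*}
\Sigma(u, v) = \sum_{\emptyset \le x \le y \le P} (-1)^{1+\dim x}\, u^{1+\dim y}\, S_x(v/u)\, S_{y^\vee}(uv)\, B_{[x,y]}(u^{-1}, v).
\end{equation*}

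Next I perform a case analysis indexed by $(\dim x, \dim y)$. Expanding $S_x(v/u) = \sum_i s_i^x u^{-i} v^i$, $S_{y^\vee}(uv) = \sum_j s_j^{y^\vee} u^j v^j$ and $B_{[x,y]}(u^{-1}, v) = \sum_{m,k} b_{m,k} u^{-m} v^k$, the contribution to the coefficient of $u^2 v^2$ from a pair $(x,y)$ is nonzero only when $i + j + k = 2$ and $1 + \dim y - i + j - m = 2$, subject to $0 \le i \le \dim x$, $0 \le j \le \dim y^\vee$, $0 \le m \le \dim y - \dim x$, and $k < (\dim y - \dim x)/2$ by Proposition~\ref{prop:Bproperties}(1). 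Combined with the explicit $B$-formulas of Proposition~\ref{prop:Bproperties}, namely $B_{[x,y]}(u^{-1}, v) = (1 - u^{-1})^{\dim y - \dim x}$ when the interval has rank at most $2$ and the polygon formula when $[x,y]$ is the face poset of a polygon, this restricts the sum to a finite and manageable list of pairs. The key identifications I use are $s_0^\cF = 1$ and $s_1^\cF = \ell(\cF) - \dim\cF - 1$ from Proposition~\ref{prop:STproperties}(2), and $s_{\dim\cF}^\cF = \ell^*(\cF)$ from combining Proposition~\ref{prop:STproperties}(1) and (3); additionally, reflexivity of $\Delta$ gives Gorenstein index $1$, so the palindromic identity $s_i^\Delta = s_{4-i}^\Delta$ of Proposition~\ref{prop:STproperties}(6) applies.

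I expect the three summands of the theorem to emerge as follows. The constant $\ell(\Delta) - 5$ comes from pairs involving $y = \emptyset$ (contributing $S_\Delta(uv)$) and $y = P$ together with the adjacent top-dimensional pairs; the palindromic identity for $S_\Delta$ lets one replace the $s_2^\Delta$ coefficient that naturally appears by $s_1^\Delta = \ell(\Delta) - 5$ after gathering terms. The vertex sum $-\sum_{\dim y = 0} \ell^*(y^\vee)$ arises from pairs $(\emptyset, y)$ for $y$ a vertex of $\Delta^\circ$, together with dual pairs $(x, P)$ for $x$ a facet of $\Delta^\circ$, producing the $\ell^*$ of the facet $y^\vee$ of $\Delta$. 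The edge sum $+\sum_{\dim y = 1} \ell^*(y) \ell^*(y^\vee)$ assembles primarily from pairs $(y, y)$ with $y$ an edge of $\Delta^\circ$, via $S_y(v/u) = 1 + \ell^*(y) v/u$ and $S_{y^\vee}(uv) = 1 + \ell^*(y^\vee) uv$, together with related pairs $(\emptyset, y)$ and their duals.

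The main obstacle is the bookkeeping needed to show that the contributions from pairs involving $2$-faces of $\Delta^\circ$, where the polygon $B$-formula of Proposition~\ref{prop:Bproperties}(3) introduces a vertex-count dependence, and from pairs with $\dim y \ge 3$, whose $B$-polynomials have no simple closed form, cancel completely so that only the three stated families survive. I anticipate this cancellation follows from Euler-type identities on the face poset of $\Delta^\circ$ combined with the palindromic structure of $S_\Delta$; carrying out this cancellation cleanly is the substantive part of the proof and serves as the template for the more elaborate complete-intersection calculation carried out in the following section.
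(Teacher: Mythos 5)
You set up the generating function correctly, but your decision to extract the coefficient of $u^2v^2$ rather than $u^3v^3$ is the crux of the problem, and it leaves a genuine gap. The paper uses $h^{1,1}(X)=h^{2,2}(X)$ and reads off the coefficient of $u^3v^3$ precisely because a \emph{high} $v$-degree forces the $S$-polynomials to absorb almost all of the $v$-degree: since $\deg_v B_{[x,y]} < d[x,y]/2$ and $\deg S_x\leqslant\dim x$, $\deg S_{y^\vee}\leqslant \dim y^\vee=3-\dim y$, demanding total $v$-degree $3$ eliminates every interval of rank $\geqslant 3$ with a one-line degree count (e.g.\ for $[\emptyset,P]$ one would need $v^3$ from $B$ alone, impossible since $\deg_v B<5/2$). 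With $u^2v^2$ this mechanism is unavailable: the $v^0$ and $v^1$ parts of $B_{[x,y]}(u^{-1},v)$ for intervals of rank $4$ and $5$ do contribute, and Proposition~\ref{prop:Bproperties} gives you no closed form for those polynomials --- they depend on the flag numbers of $\Delta^\circ$ through the recursive $G$- and $H$-polynomials. For instance the pair $(\emptyset,P)$ contributes the coefficient of $u^3v^2$ in $B_{[\emptyset,P]}(u,v)$, which is a nontrivial invariant of the whole face poset.

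You acknowledge this in your last paragraph and assert that these contributions ``cancel completely'' via ``Euler-type identities,'' but that cancellation is exactly the substantive content you would need to prove, and nothing in the propositions you cite supplies it. (The cancellation must occur, since the $u^2v^2$ and $u^3v^3$ coefficients agree by the symmetry of $E_C$, but establishing it directly is much harder than the theorem itself.) The fix is simply to switch targets: extract the coefficient of $u^3v^3$, after which the case analysis collapses to the short list of intervals of rank $\leqslant 2$ together with the pairs having $x=\emptyset$ or $y=P$, each handled by Propositions~\ref{prop:Bproperties} and~\ref{prop:STproperties} plus the palindromic property of $S_{P^*}$, exactly as in the paper. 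Your identifications of the individual terms (the constant from $(\emptyset,\emptyset)$, the vertex sum from $(\emptyset,y)$ with $\dim y=0$, the edge sum from $(y,y)$ with $\dim y=1$) are essentially right once that switch is made, except that the dual pairs $(x,P)$ you invoke in fact contribute nothing in the $u^3v^3$ computation.
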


\begin{proof} 
A hypersurface can be treated in the above framework as a complete intersection with $r=1$, $P^*\simeq \Delta$, and $P\simeq\Delta^\circ$.

The generating function for $n=4$ and $r=1$ is given as
\begin{align*}
uv E_C (u,v) = \sum_{\emptyset\leqslant x\leqslant y\leqslant P}
(-1)^{1+\dim x} u^{1+\dim y} S_x\left(\frac{v}{u}\right) S_{y^\vee}(uv) B_{[x,y]}\left(u^{-1},v\right),
\end{align*}
so $h^{1,1}(X)=h^{2,2}(X)$ is equal to the coefficient of $u^2 v^2$ or $u^3 v^3$. Below, by extensively using Propositions~\ref{prop:Bproperties} and~\ref{prop:STproperties} without further mention, we will determine the coefficient of $u^3 v^3$ in the term corresponding to each pair $(x,y)$ on the right hand side. The reason for concentrating on a ``high $v$-degree'' term is that it allows us to deal only with simple $B$-polynomials corresponding to Eulerian posets of small rank, as we will see below. Note also that for the current case $\dim y^\vee=3-\dim y$ and $d[\emptyset, P]=5$.

First of all, observe that terms with $B$ depending on $v$ do not contribute to the coefficient of $u^3v^3$. Indeed, if the $v$-degree of $B$ is positive, then $d[x,y]\geqslant 3$ and we must have $\dim x\leqslant 1$, $\dim y\geqslant 2$, i.e. $\dim y^\vee\leqslant 1$, and at least one of these inequalities is strict. Then either both $S$-polynomials are equal to one or one is equal to one and the other is linear. On the other hand, $B_{[x,y]}\left(u^{-1},v\right)$ could only have $v$-degree $2$ or more if $d[x,y]\geqslant 5$, which is only possible for $[x,y]=[\emptyset, P]$, where both $S$-polynomials are equal to one. Therefore, the product of all these polynomials does not contain a $u^3v^3$ term.

Next we are going to consider cases with $d[x,y]\leqslant 2$ and either $x=\emptyset$ or $y=P$. The reason for separating these cases from the rest is that the degree of $S_\emptyset=S_{P^\vee}=1$ is not bounded by $\dim\emptyset=-1$.

\textbf{Suppose $x=\emptyset$ and $\dim y\leqslant 1$.} Then the corresponding term of the generating function is
\begin{align*}
u^{1+\dim y} S_{y^\vee}(uv) (1-u^{-1})^{1+\dim y}
=S_{y^\vee}(uv) (u-1)^{1+\dim y},
\end{align*}
where $S_{y^\vee}$ is a polynomial of degree at most $3-\dim y$. We see that in the only possible cases for $\dim y=-1,0$ the contribution to $u^3 v^3$-term is determined by the third degree term in $S_{y^\vee}$ and, by using the symmetry property of the $S$-polynomial of the reflexive polytope $P^*$, we obtain
\begin{align}
\label{hyper:empty_empty}
&\ell(P^*)-5,\\
\label{hyper:empty_0}
-\sum_{\dim y=0}&\ell^*(y^\vee).
\end{align}

\textbf{Suppose $\dim x\geqslant 2$ and $y = P$.} Then the corresponding term of the generating function is
\begin{align*}
(-1)^{1+\dim x} u^5 S_x\left(\frac{v}{u}\right) (1-u^{-1})^{4-\dim x},
\end{align*}
where $S_x$ is a polynomial of degree at most $\dim x$, which must be $3$ or $4$ in order to have any term with $v^3$. In these cases the contribution is determined by the third degree term in $S_x$, however, $\left(\dfrac{v}{u}\right)^3$ must be multiplied by $u^6$ in order to get $u^3 v^3$, which is not possible. We see that there are no contributions to $u^3 v^3$-term.

We consider remaining cases, $d[x,y]=0,1,2$, with $x\neq\emptyset$ and $y\neq P$.

\textbf{Suppose $x\neq\emptyset$, $y\neq P$, and $x=y$.} Then the corresponding term of the generating function is
\begin{align*}
(-u)^{1+\dim y} S_y\left(\frac{v}{u}\right) S_{y^\vee}(uv),
\end{align*}
where $S_y$ and $S_{y^\vee}$ are polynomials of degrees at most $\dim y$ and $3-\dim y$. In order to get $v^3$, we need to multiply the leading terms of these polynomials. The $u$-degree of the $v^3$ term in the total product will be $1+\dim y - \dim y + 3-\dim y = 4-\dim y$. Since we are interested in $u^3$ terms, we must have $\dim y=1$. The corresponding contribution is
\begin{align} \label{hyper:1_1}
\sum_{\dim y=1}& \ell^*(y)\cdot \ell^*(y^\vee).
\end{align}

\textbf{Suppose $x\neq\emptyset$, $y\neq P$, and $\dim y=1+\dim x$ or $\dim y=2+\dim x$.} Then we see that there are no contributions to $h^{1,1}(X)$, since the total degree of the product of the $S$-polynomials is at most 2.

Now combining all contributions we obtain \eqref{hyper:h11}, which completes the proof.
\end{proof}

\begin{remark}
The terms of~\eqref{hyper:h11} have the following meaning. Torus-invariant divisors of the ambient space, corresponding to lattice points of $\Delta$, except for the origin, have 4 linear relations between them. Divisors corresponding to the interior points of the facets do not intersect a generic Calabi-Yau hypersurface, while divisors corresponding to the interior points of faces of codimension two may become reducible when intersected with this hypersurface.
\end{remark}

\begin{corollary}
If $h^{1,1}=1$ for a Calabi-Yau hypersurface in the toric variety associated to a 4-dimensional reflexive polytope $\Delta$, then $\Delta$ is a simplex.
\end{corollary}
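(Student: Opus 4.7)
The plan is to rewrite the formula \eqref{hyper:h11} for $h^{1,1}(X)$ entirely in terms of faces of $\Delta$ and then extract a simple lower bound in terms of the number of vertices of $\Delta$.

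First I would apply duality to the two sums over faces of $\Delta^\circ$. Since $\Delta$ is 4-dimensional, a vertex of $\Delta^\circ$ corresponds to a facet of $\Delta$, so the first sum equals $\sum_{F} \ell^*(F)$ over facets $F$ of $\Delta$. Similarly, an edge of $\Delta^\circ$ corresponds to a 2-face of $\Delta$, so the second sum equals $\sum_{f} \ell^*(f) \ell^*(f^\vee)$ over 2-faces $f$ of $\Delta$ (writing $f^\vee$ for the dual edge of $\Delta^\circ$).

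Next I would stratify $\ell(\Delta)$ by relative interiors of faces. Since $\Delta$ is reflexive, $\ell^*(\Delta) = 1$, and each lattice point of $\Delta$ lies in the relative interior of exactly one face. Therefore
\begin{align*}
\ell(\Delta) = 1 + \#V(\Delta) + \sum_{e} \ell^*(e) + \sum_{f} \ell^*(f) + \sum_{F} \ell^*(F),
\end{align*}
where the sums range over edges, 2-faces, and facets of $\Delta$ respectively. Substituting into \eqref{hyper:h11}, the facet contributions cancel and we obtain
\begin{align*}
h^{1,1}(X) = \#V(\Delta) - 4 + \sum_{e} \ell^*(e) + \sum_{f} \ell^*(f) + \sum_{f} \ell^*(f)\ell^*(f^\vee).
\end{align*}

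Every summand on the right-hand side is a non-negative integer, hence $h^{1,1}(X) \geqslant \#V(\Delta) - 4$. If $h^{1,1}(X) = 1$, this forces $\#V(\Delta) \leqslant 5$. On the other hand, any 4-dimensional polytope has at least 5 vertices, so $\#V(\Delta) = 5$. Five points whose convex hull is 4-dimensional are necessarily affinely independent, so $\Delta$ is a 4-simplex.

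There is no real obstacle here; the only care needed is bookkeeping: pairing faces of $\Delta^\circ$ with the correct faces of $\Delta$ under duality, and verifying that the decomposition of $\ell(\Delta)$ does not double-count (which is clear from the disjoint stratification by relative interiors). Once those are in place, the conclusion is immediate from the non-negativity of the remaining terms.
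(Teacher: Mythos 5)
Your proof is correct and follows essentially the same route as the paper: both decompose $\ell(\Delta)$ into counts of interior points of faces, cancel the facet contribution against the $-\sum_{\dim y=0}\ell^*(y^\vee)$ term, and read off $h^{1,1}(X)\geqslant \#V(\Delta)-4$ from the non-negativity of the remaining terms. The only cosmetic difference is that you re-index the sums by faces of $\Delta$ while the paper keeps them indexed by the dual faces of $\Delta^\circ$.
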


\begin{proof}
This easily follows from~\eqref{hyper:h11}, if we split $\ell(\Delta)$ into the sum of internal points of all of its faces:
\begin{align*}
h^{1,1}(X)
&=
\ell^*(\Delta)
+\sum_{\dim y=0,1,2,3}\ell^*(y^\vee)
-5
-\sum_{\dim y=0}\ell^*(y^\vee)
+\sum_{\dim y=1} \ell^*(y)\cdot \ell^*(y^\vee)\\
&=
\sum_{\dim y=1,2}\ell^*(y^\vee)
+\left[\sum_{\dim y=3}\ell^*(y^\vee)
-4\right]
+\sum_{\dim y=1} \ell^*(y)\cdot \ell^*(y^\vee).
\end{align*}
Since faces dual to faces of dimension 3 are vertices of $\Delta$, we see that the term in brackets is positive while all others are non-negative, and if $h^{1,1}(X)=1$, $\Delta$ must have exactly 5 vertices, i.e. be a simplex.
\end{proof}

\begin{remark}
While the number of reflexive polytopes of any fixed dimension is finite (up to $GL(\ZZ)$ action) and there is an algorithm allowing one to construct all of them (realized in PALP~\cite{PALP}), this number for dimension 5 and higher is so big, that it is practically impossible. However, results similar to the above corollary can allow for construction of all reflexive polytopes corresponding to Calabi-Yau varieties with small Hodge numbers.
\end{remark}

\section{The bipartite complete intersection case}

In this section we derive the closed form expression for $h^{1,1}(X)$ of a bipartite Calabi-Yau threefold complete intersection in the five-dimensional toric variety ${\mathbb{P}}_\Delta$.

\begin{theorem} \label{thm:h11cigeneric}
Let $\Delta$ be a five-dimensional reflexive polytope. Let $X\subset{\mathbb{P}}_\Delta$ be a complete intersection Calabi-Yau threefold corresponding to a fixed nef partition of $\Delta$ with associated dual Cayley polytope $P$. Then
\begin{alignat*}{3}
h^{1,1}(X)
&&=&
\ell(P^*)-7
&-&\!\!\sum_{\dim y=0}\left[ \ell^*(2\cdot y^\vee)-6\cdot\ell^*(y^\vee)\right]\\
&&+&\sum_{\dim y=1}\ell^*(y^\vee)
&+&\!\!\sum_{\dim y=1}\ell^*(y)\cdot \left[\ell^*(2\cdot y^\vee)-5\cdot\ell^*(y^\vee)\right]\\
&&-&\sum_{\dim y=2} [\ell(y) - \ell^*(y) - 3]\cdot \ell^*(y^\vee)
&-&\!\!\sum_{\sumfrac{\dim x=2}{\sumfrac{\dim y=3}{x<y}}} \ell^*(x)\cdot \ell^*(y^\vee)\\
&&+&\sum_{\dim y=3}\left[\ell^*(2\cdot y)-4\cdot\ell^*(y)\right]\cdot\ell^*(y^\vee),
\end{alignat*}
where sums run over faces of $P$ of indicated dimensions.
\end{theorem}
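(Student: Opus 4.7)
Following the hypersurface proof, the plan is to extract the coefficient of $u^4v^4$ from
\[
u^2 v^2 E_C(u,v) = \sum_{\emptyset\leqslant x\leqslant y\leqslant P}
(-1)^{1+\dim x} u^{1+\dim y} S_x\!\left(\frac{v}{u}\right) S_{y^\vee}(uv) B_{[x,y]}(u^{-1},v),
\]
which equals $h^{2,2}(X)=h^{1,1}(X)$. Targeting this ``high-$v$'' coefficient has the same advantage as in the hypersurface case: since $\deg_v B_{[x,y]}(u^{-1},v)<d[x,y]/2$, only intervals of very small rank can contribute. Here $P$ has dimension $n+r-1=6$, so $\dim y+\dim y^\vee=5$ for a face $y$ of $P$ and $d[\emptyset,P]=7$; moreover $P^*$ is Gorenstein of index $r=2$, so $S_{P^*}$ has degree $5$ and is symmetric by Proposition~\ref{prop:STproperties}(6).

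For $x\neq\emptyset$ and $y\neq P$ the $v$-degree of the summand is at most $\dim x+(5-\dim y)+\lceil d[x,y]/2\rceil-1$; requiring it to be at least $4$ forces $d[x,y]\in\{0,1\}$, so $B_{[x,y]}=(1-u^{-1})^{d[x,y]}$ carries no $v$-dependence. The boundary case $x=\emptyset$, in which $S_\emptyset=1$ escapes the $\dim x$ bound, relaxes this to $\dim y\leqslant 2$, and the dual case $y=P$ to $\dim x\geqslant 3$; the only interval carrying non-trivial $v$-dependence in $B$ that can actually contribute is $[\emptyset,y]$ with $\dim y=2$, to which Proposition~\ref{prop:Bproperties}(3) applies with $k_y$ equal to the number of vertices of the $2$-face $y$. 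I would then process the surviving pairs by $(\dim x,\dim y)$: the pair $(\emptyset,\emptyset)$ yields $S_{P^*}(uv)$, whose $(uv)^4$-coefficient equals $\ell(P^*)-7$ by Gorenstein symmetry; the pairs $(\emptyset,y)$ with $\dim y=0$ and $\dim y=1$ produce $-[\ell^*(2\cdot y^\vee)-6\ell^*(y^\vee)]$ and $\ell^*(y^\vee)$ per face, using the top and subtop coefficients of $S_{y^\vee}$; the pair $(\emptyset,y)$ with $\dim y=2$ produces $-(k_y-3)\ell^*(y^\vee)$ via the polygon $B$; the diagonal pairs $x=y$ with $\dim y\in\{1,3\}$ produce $\ell^*(y)[\ell^*(2\cdot y^\vee)-5\ell^*(y^\vee)]$ and $[\ell^*(2\cdot y)-4\ell^*(y)]\ell^*(y^\vee)$ per face via cross-products of leading and subleading $S$-coefficients; and the rank-$1$ intervals with $(\dim x,\dim y)\in\{(1,2),(2,3)\}$ contribute $-\ell^*(x)\ell^*(y^\vee)$ per pair. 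All other $(x,y)$ pairs---the boundary cases $y=P$, the remaining rank-$1$ intervals, and the diagonal cases with $\dim y\in\{0,2,4,5,6\}$---give zero by direct $u$-degree bookkeeping.

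The only non-automatic step is combining the $\dim y=2$ contributions. The polygon formula for $[\emptyset,y]$ with a $k_y$-vertex $2$-face $y$ yields $-(k_y-3)\ell^*(y^\vee)$, while the rank-$1$ intervals $(\text{edge }e\subset y,\,y)$ each contribute $-\ell^*(e)\ell^*(y^\vee)$. The boundary-lattice-count identity for a $2$-face,
\[
\ell(y)-\ell^*(y)=k_y+\sum_{e\text{ edge of }y}\ell^*(e),
\]
collapses these into the single sum $-\sum_{\dim y=2}[\ell(y)-\ell^*(y)-3]\ell^*(y^\vee)$ of the statement. Assembling all contributions matches the seven terms of the theorem. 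The main obstacle is organisational rather than conceptual: despite the sharp pruning, several $(x,y)$ pairs must be examined through multiple $v$-power combinations, and the subtlest point is the identification, via the boundary identity above, of the combinatorial vertex count $k_y$ produced by Proposition~\ref{prop:Bproperties}(3) with the lattice-point expression $\ell(y)-\ell^*(y)-3$ appearing in the statement.
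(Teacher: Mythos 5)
Your proposal follows essentially the same route as the paper's proof: extracting the coefficient of $u^4v^4$ from $(uv)^2E_C(u,v)$, pruning via the bound $\deg_v B_{[x,y]}<d[x,y]/2$ so that only the polygon interval $[\emptyset,y]$ with $\dim y=2$ carries genuine $v$-dependence in $B$, treating the boundary cases $x=\emptyset$ and $y=P$ separately, reading off the surviving contributions from the leading and subleading coefficients of the $S$-polynomials, and finally merging the $(\emptyset,\dim y=2)$ polygon term with the edge-in-face terms via $\ell(y)-\ell^*(y)=k_y+\sum_e\ell^*(e)$. All identified contributions and the final combination agree with the paper; your uniform degree bound is just a slightly more streamlined packaging of the paper's case-by-case pruning.
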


\begin{proof}
In this case we have the following relation for the generating function:
\begin{align*}
(uv)^2 E_C (u,v) = \sum_{\emptyset\leqslant x\leqslant y\leqslant P}
(-1)^{1+\dim x} u^{1+\dim y} S_x\left(\frac{v}{u}\right) S_{y^\vee}(uv) B_{[x,y]}\left(u^{-1},v\right),
\end{align*}
so $h^{1,1}(X)=h^{2,2}(X)$ is equal to the coefficient of $u^3 v^3$ or $u^4 v^4$. Below we will determine the coefficient of $u^4 v^4$ in the term corresponding to each pair $(x,y)$ on the right hand side. Note, that $\dim y^\vee=5-\dim y$.

First of all, let's consider all $(x,y)$-pairs with positive $v$-degree of $B_{[x,y]}$. Since $d[\emptyset,P]=7$, the highest possible $v$-degree of $B_{[x,y]}$ is 3. However, this is the only pair when degree 3 is a possibility and $S_\emptyset=S_{P^\vee}=1$, thus it does not give a contribution to $u^4v^4$. If the $v$-degree of $B_{[x,y]}$ is 2, then $d[x,y]\geqslant 5$ and either both $S$-polynomials are equal to 1, or one of them is 1 and the other is linear, so again such pairs yield no contribution to $u^4 v^4$. If the $v$-degree of $B$ is 1, then $d[x,y]\geqslant 3$ and it is easy to see that only for $x=\emptyset$ and $\dim y =2$ or $\dim x = 3$ and $y=P$ it is possible to have the total $v$-degree of $S_x$ and $S_{y^\vee}$ greater than 2. 

\textbf{Suppose $x=\emptyset$ and $\dim y=2$.} Let $k(y)$ be the number of vertices of $y$. Then the corresponding term of the generating function is
\begin{multline*}
u^3 S_{y^\vee}(uv) (1+[k(y)-(k(y)-3)v](u^{-2}-u^{-1})-u^{-3})\\
=S_{y^\vee}(uv) (u^3+[k(y)-(k(y)-3)v](u-u^2)-1),
\end{multline*}
where $S_{y^\vee}$ is a polynomial of degree at most 3. Its leading coefficient is $\ell^*(y^\vee)$, thus the contribution to $h^{1,1}$ is
\begin{align} \label{bipart:empty_2}
-\sum_{\dim y=2} (k(y)-3)\ell^*(y^\vee).
\end{align}

\textbf{Suppose $\dim x=3$ and $y=P$.}  Let $k(x)$ be the number of vertices of $x^\vee$. Then the corresponding term of the generating function is
\begin{multline*}
u^7 S_x\left(\frac{v}{u}\right) (1+[k(x)-(k(x)-3)v](u^{-2}-u^{-1})-u^{-3})\\
=S_x(uv) (u^7+[k(x)-(k(x)-3)v](u^5-u^6)-u^4),
\end{multline*}
where $S_x$ is a polynomial of degree at most 3. We see that terms with $v^4$ have $u$-degree 2 or 3, thus there is no contribution into $h^{1,1}$.

Next we are going to consider cases with $d[x,y]\leqslant 2$ and either $x=\emptyset$ or $y=P$. The reason for separating these cases from the rest is that the degree of $S_\emptyset=S_{P^\vee}=1$ is not bounded by $\dim\emptyset=-1$.

\textbf{Suppose $x=\emptyset$ and $\dim y\leqslant 1$.} Then the corresponding term of the generating function is
\begin{align*}
u^{1+\dim y} S_{y^\vee}(uv) (1-u^{-1})^{1+\dim y}
=S_{y^\vee}(uv) (u-1)^{1+\dim y},
\end{align*}
where $S_{y^\vee}$ is a polynomial of degree at most $5-\dim y$. We see that in all three cases for $\dim y=-1,0,1$ the contribution to $h^{1,1}$ is determined by the fourth degree term in $S_{y^\vee}$ and, using that $P^*$ is a six-dimensional Gorenstein polytope of index 2, we obtain
\begin{align}
\label{bipart:empty_empty}
&\ell(P^*)-7,\\
\label{bipart:empty_0}
-\sum_{\dim y=0}&\left[ \ell^*(2\cdot y^\vee)-6\cdot\ell^*(y^\vee)\right],\\
\label{bipart:empty_1}
\sum_{\dim y=1}&\ell^*(y^\vee).
\end{align}

\textbf{Suppose $\dim x\geqslant 4$ and $y = P$.} Then the corresponding term of the generating function is
\begin{align*}
(-1)^{1+\dim x} u^7 S_x\left(\frac{v}{u}\right) (1-u^{-1})^{6-\dim x},
\end{align*}
where $S_x$ is a polynomial of degree at most $\dim x$. We see that in all three cases for $\dim x=4,5,6$ the contribution to $h^{1,1}$ is determined by the fourth degree term in $S_x$, however $\left(\dfrac{v}{u}\right)^4$ must be multiplied by $u^8$ in order to get $u^4 v^4$. We see that this is not possible and there are no contributions to $h^{1,1}$.

We consider remaining cases, $d[x,y]=0,1,2$, with $x\neq\emptyset$ and $y\neq P$.

\textbf{Suppose $x\neq\emptyset$, $y\neq P$, and $x=y$.} Then the corresponding term of the generating function is
\begin{align*}
(-u)^{1+\dim y} S_y\left(\frac{v}{u}\right) S_{y^\vee}(uv),
\end{align*}
where $S_y$ and $S_{y^\vee}$ are polynomials of degrees at most $\dim y$ and $5-\dim y$. Consider degree $\alpha$ term in $S_y$, $0\leqslant\alpha\leqslant\dim y$. In order to get $v^4$ we need to multiply it by degree $4-\alpha$ term from $S_{y^\vee}$, $0\leqslant 4-\alpha\leqslant 5-\dim y$. Then the $u$-degree of this product with $u^{1+\dim y}$ will be $1+\dim y - \alpha + 4-\alpha = 5+\dim y-2\alpha$. Since we are interested in $u^4$ terms, possible values for $(\dim y,\alpha)$ satisfying all the restrictions are (1,1) and (3,2). The corresponding contributions are
\begin{align}
\label{bipart:1_1}
\sum_{\dim y=1}& \ell^*(y)\cdot \left[\ell^*(2\cdot y^\vee)-5\cdot\ell^*(y^\vee)\right],\\
\label{bipart:3_3}
\sum_{\dim y=3}& \left[\ell^*(2\cdot y)-4\cdot\ell^*(y)\right]\cdot\ell^*(y^\vee).
\end{align}

\textbf{Suppose $x\neq\emptyset$, $y\neq P$, and $\dim y=1+\dim x$.} Then the corresponding term of the generating function is
\begin{align*}
(-1)^{1+\dim x} u^{1+\dim y} S_x\left(\frac{v}{u}\right) S_{y^\vee}(uv) (1-u^{-1})
\!=\!
 (-u)^{1+\dim x} S_x\left(\frac{v}{u}\right) S_{y^\vee}(uv) (u-1),
\end{align*}
where $S_x$ and $S_{y^\vee}$ are polynomials of degrees at most $\dim x$ and $4-\dim x$, thus only the product of their leading terms yields terms with $v^4$. Taking into account remaining factors of the product, we see that possible $u$-degrees of terms with $v^4$ are $1+\dim x - \dim x + 4-\dim x = 5-\dim x$ and greater by one, $6-\dim x$. Therefore, we get $u^4 v^4$ terms and contributions to $h^{1,1}$ only for $\dim x=1$ or $\dim x=2$:
\begin{align}
\label{bipart:1_2}
-\sum_{\dim x=1, \dim y=2, x<y}& \ell^*(x)\cdot \ell^*(y^\vee),\\
\label{bipart:2_3}
-\sum_{\dim x=2, \dim y=3, x<y}& \ell^*(x)\cdot \ell^*(y^\vee).
\end{align}

\textbf{Suppose $x\neq\emptyset$, $y\neq P$, and $\dim y=2+\dim x$.} Then we see that there are no contributions to $h^{1,1}$, since the total degree of $S$-polynomials is at most 3.

Now let's combine all the contributions:
\begin{alignat*}{3}
h^{1,1}
&&=&
\ell(P^*)-7
&-&\sum_{\dim y=0}\left[ \ell^*(2\cdot y^\vee)-6\cdot\ell^*(y^\vee)\right]\\
&&+&\sum_{\dim y=1}\ell^*(y^\vee)
&-&\sum_{\dim y=2}(k(y)-3)\ell^*(y^\vee)\\
&&+&\sum_{\dim y=1}\ell^*(y)\cdot \left[\ell^*(2\cdot y^\vee)-5\cdot\ell^*(y^\vee)\right]
&-&\sum_{\sumfrac{\dim x=1}{\sumfrac{\dim y=2}{x<y}}} \ell^*(x)\cdot \ell^*(y^\vee)\\
&&-&\sum_{\sumfrac{\dim x=2}{\sumfrac{\dim y=3}{x<y}}} \ell^*(x)\cdot \ell^*(y^\vee)
&+&\sum_{\dim y=3}\left[\ell^*(2\cdot y)-4\cdot\ell^*(y)\right]\cdot\ell^*(y^\vee).
\end{alignat*}
Observe, that terms \eqref{bipart:empty_2} and \eqref{bipart:1_2} can be naturally combined, since the first one contains the number of vertices of a 2-face $y$, while the second one sums over internal points of all edges of each 2-face $y$. Therefore, the total contribution of these two terms is
\begin{align*}
-\sum_{\dim y=2} [\ell(y) - \ell^*(y) - 3]\cdot \ell^*(y^\vee),
\end{align*}
where $\ell(y)-\ell^*(y)$ is the number of boundary points of $y$. This leads us to the stated formula for $h^{1,1}(X)$ and completes the proof.
\end{proof}

\begin{lemma} \label{lem:otherhpq}
In the notation of Theorem~\ref{thm:h11cigeneric}, we have 
\begin{align*}
h^{3,3}(X)
&=
1
+\sum_{\dim y=1}\ell^*(y)\cdot\ell^*(y^\vee)
-\sum_{\dim y=0}\ell^*(y^\vee),\\
h^{2,3}(X)
&=
\sum_{\dim y=2}\ell^*(y)\cdot\ell^*(y^\vee),\\
h^{3,2}(X)
&=
-\sum_{\dim y=2}
\left[\ell(y)+3\cdot\ell^*(y)-3-\ell^*(2\cdot y)\right] \ell^*(y^\vee).
\end{align*}
\end{lemma}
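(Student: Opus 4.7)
The plan is to adapt the technique used in the proofs of the hypersurface case and of Theorem~\ref{thm:h11cigeneric}: for each of $h^{3,3}$, $h^{2,3}$, $h^{3,2}$, I would write it as $(-1)^{p+q}$ times the coefficient of $u^{p+2}v^{q+2}$ in $(uv)^2 E_C(u,v)$ and then perform a case analysis over pairs $(x,y)$ with $\emptyset\le x\le y\le P$. Propositions~\ref{prop:Bproperties} and~\ref{prop:STproperties} bound the $u$- and $v$-degrees of the factors $S_x(v/u)$, $S_{y^\vee}(uv)$, $B_{[x,y]}(u^{-1},v)$, and identify the relevant low-order coefficients of each $S$-polynomial via the $T$-polynomial relation $S_\cF(t)=t^{d+1}T_\cF(t^{-1})$.

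For $h^{3,3}$ the target is the coefficient of $u^5v^5$, and the $v$-degree constraint restricts the contributing pairs to three families: $(\emptyset,\emptyset)$, which yields $S_{P^*}(uv)$ whose $u^5v^5$ coefficient is the $t^5$-coefficient of $S_{P^*}$, equal to $1$ by the Gorenstein-index-$2$ palindromy (Proposition~\ref{prop:STproperties}(6)) combined with $S_{P^*}(0)=1$; the pairs $(\emptyset,y)$ with $\dim y=0$, producing $-\ell^*(y^\vee)$ per vertex; and the diagonals $x=y$ with $\dim y=1$, producing $\ell^*(y)\ell^*(y^\vee)$ per edge. Summing gives the stated formula. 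For $h^{2,3}$ the target is the coefficient of $u^4v^5$; the combined $u$- and $v$-constraints leave only the diagonal $x=y$ with $\dim y=2$, whose leading-leading contribution $-\ell^*(y)\ell^*(y^\vee)$ per $2$-face yields $h^{2,3}=\sum_{\dim y=2}\ell^*(y)\ell^*(y^\vee)$ after the overall sign flip.

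For $h^{3,2}$ one extracts the coefficient of $u^5v^4$. Now seven families of pairs can contribute: $(\emptyset,y)$ for $\dim y\in\{0,1,2\}$, the diagonals $x=y$ for $\dim y\in\{0,2\}$, and the chains $(\dim x,\dim y)\in\{(0,1),(1,2)\}$ with $x<y$. The $\dim y\in\{0,1\}$ boundary cases invoke the second-to-leading coefficient of $S_{y^\vee}$, which by Proposition~\ref{prop:STproperties}(3) involves $\ell^*(2\cdot y^\vee)$, while the polygon pair $(\emptyset,y)$ with $\dim y=2$ invokes the explicit form of $B$ from Proposition~\ref{prop:Bproperties}(3) with vertex count $k(y)$. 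The main obstacle is the bookkeeping: one has to show that the $\dim y=0$ contributions cancel between $(\emptyset,y)$ and $x=y$, that the $\dim y=1$ contributions cancel between $(\emptyset,y)$ and the $(0,1)$-chain, and then rewrite the surviving sum over 2-faces using the decomposition $\ell(y)=k(y)+\sum_{\dim e=1,\,e<y}\ell^*(e)+\ell^*(y)$ to absorb the $(1,2)$-chain contribution, and finally repackage everything in terms of the Ehrhart invariant $\ell^*(2\cdot y)$ (whose appearance is already signalled by Proposition~\ref{prop:STproperties}(3)) to arrive at the claimed closed form.
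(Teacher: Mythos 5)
Your strategy is exactly the one the paper itself invokes (its proof of this lemma is the single sentence ``the same type of argument as for the theorem''), and your case analyses for $h^{3,3}$ and $h^{2,3}$ are complete and correct: the three families contributing to $u^5v^5$ and the single diagonal family contributing to $u^4v^5$ are indeed the only ones, with the coefficients you name. Two remarks on $h^{3,2}$. First, a small slip: for $\dim y=1$ the relevant coefficient of $S_{y^\vee}$ is its leading ($t^4$-)coefficient $\ell^*(y^\vee)$, not a second-to-leading one involving $\ell^*(2\cdot y^\vee)$; this is harmless since, as you say, those contributions cancel against the $(0,1)$-chains. Second, and more substantively: if you carry out the final combination over $2$-faces honestly, using $[t^1]S_y=\ell(y)-3$ for the diagonal term, $k(y)-3$ from the polygon $B$-polynomial, $\sum_{e<y}\ell^*(e)$ from the $(1,2)$-chains, and $k(y)+\sum_{e<y}\ell^*(e)=\ell(y)-\ell^*(y)$, you obtain
\[
h^{3,2}(X)=-\sum_{\dim y=2}\bigl[\ell(y)+2\,\ell^*(y)-3-\ell^*(2\cdot y)\bigr]\ell^*(y^\vee)=\sum_{\dim y=2}\ell^*(y)\,\ell^*(y^\vee),
\]
where the second equality is Pick's formula for the doubled polygon, $\ell^*(2\cdot y)=\ell(y)+3\ell^*(y)-3$. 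This agrees with $h^{2,3}(X)$, as the symmetry of the $E$-polynomial requires. The bracket printed in the lemma has $3\ell^*(y)$ where the computation produces $2\ell^*(y)$, and by the same Pick identity the printed bracket vanishes identically; so you should not expect to ``arrive at the claimed closed form'' literally. The discrepancy is exactly the term $\sum_{\dim y=2}\ell^*(y)\ell^*(y^\vee)$, which vanishes in the indecomposable case by~\eqref{h23is0}, so the corollary and everything downstream are unaffected; but be aware that your (correct) bookkeeping will not reproduce the coefficient $3$ as stated.
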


\begin{proof}
The same type of argument as for the proof of the theorem (but shorter).
\end{proof}

\begin{corollary}
In the notation of Theorem~\ref{thm:h11cigeneric}, if the nef partition is indecomposable, the following relations hold:
\begin{align}
\label{h33is1}
\sum_{\dim y=1}\ell^*(y)\cdot\ell^*(y^\vee)
&=\sum_{\dim y=0}\ell^*(y^\vee),\\
\label{h23is0}
\sum_{\dim y=2}\ell^*(y)\cdot\ell^*(y^\vee)
&=0,\\
\label{h32is0}
\sum_{\dim y=2} \ell^*(2\cdot y)\cdot \ell^*(y^\vee)
&=\sum_{\dim y=2} \left[\ell(y)-3\right] \ell^*(y^\vee).
\end{align}
\end{corollary}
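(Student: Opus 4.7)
The strategy is to read the three identities as the Hodge-theoretic constraints that an \emph{irreducible} Calabi-Yau threefold must satisfy, then to plug in the closed-form expressions from Lemma~\ref{lem:otherhpq}.

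First I would invoke the fact recorded in Section~\ref{sec:notation}: a nef partition is indecomposable precisely when the associated Calabi-Yau variety $X$ does not split as a product (of lower-dimensional Calabi-Yau varieties coming from nef partitions in smaller-dimensional ambient spaces). Since the stringy Hodge numbers agree with the usual ones after a crepant resolution in our setting, an indecomposable nef partition yields an irreducible Calabi-Yau threefold, so its Hodge diamond is the standard one: in particular
\begin{align*}
h^{3,3}(X)=1,\qquad h^{2,3}(X)=0,\qquad h^{3,2}(X)=0,
\end{align*}
where the last two equalities reflect $h^{1,0}(X)=h^{0,1}(X)=0$ via Serre duality on a simply connected Calabi-Yau threefold.

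Next I would substitute these three numerical values into the three formulas of Lemma~\ref{lem:otherhpq} one at a time. Setting $h^{3,3}(X)=1$ in the first formula cancels the leading $1$ and rearranges immediately into \eqref{h33is1}. Setting $h^{2,3}(X)=0$ in the second formula gives \eqref{h23is0} on the nose. For the third relation, setting $h^{3,2}(X)=0$ in the expression
\begin{align*}
h^{3,2}(X)=-\sum_{\dim y=2}\bigl[\ell(y)+3\cdot\ell^*(y)-3-\ell^*(2\cdot y)\bigr]\ell^*(y^\vee)
\end{align*}
yields
\begin{align*}
\sum_{\dim y=2}\ell^*(2\cdot y)\cdot\ell^*(y^\vee)
=\sum_{\dim y=2}\bigl[\ell(y)+3\ell^*(y)-3\bigr]\ell^*(y^\vee);
\end{align*}
I would then eliminate the $3\ell^*(y)$ term using \eqref{h23is0}, producing exactly \eqref{h32is0}.

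The only nontrivial step is justifying that indecomposability of the nef partition is enough to force $h^{3,3}(X)=1$ and $h^{2,3}(X)=h^{3,2}(X)=0$; everything after that is bookkeeping. I would therefore spend the bulk of the proof pointing to the identification of indecomposable nef partitions with irreducible Calabi-Yau threefolds and then quote Lemma~\ref{lem:otherhpq} to close out the three algebraic rearrangements.
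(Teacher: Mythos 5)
Your proof is correct and follows exactly the paper's route: the paper's own proof is the one-line observation that the corollary ``follows immediately from Lemma~\ref{lem:otherhpq}, since we know that $h^{3,3}(X)=1$ and $h^{2,3}(X)=h^{3,2}(X)=0$,'' and your three substitutions (including eliminating the $3\ell^*(y)$ term via \eqref{h23is0}) are the intended bookkeeping. Your added justification of why indecomposability forces the standard Hodge diamond is a reasonable expansion of what the paper leaves implicit.
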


\begin{proof}
Follows immediately from Lemma~\ref{lem:otherhpq}, since we know that $h^{3,3}(X)=1$ and $h^{2,3}(X)=h^{3,2}(X)=0$.
\end{proof}

Now we use this corollary to prove the following result.

\begin{lemma} \label{lsylsyvis0}
Let $\Delta$ be a five-dimensional reflexive polytope. Let $P$ be the dual Cayley polytope of an indecomposable two part nef partition of $\Delta$. If $y$ is a face of $P$, then $\ell^*(y)\cdot\ell^*(y^\vee)=0$.
\end{lemma}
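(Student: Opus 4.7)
The strategy is to combine the three identities in the corollary with their mirror counterparts (obtained by applying the corollary to the dual nef partition, which is again an indecomposable two-part nef partition) and with the affine-hyperplane structure of the Cayley construction.

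The easy cases are $\dim y\in\{2,3\}$: relation \eqref{h23is0} writes the sum $\sum_{\dim y=2}\ell^*(y)\cdot\ell^*(y^\vee)$ of non-negative integers as zero, so each term vanishes individually; applying the same identity to the dual nef partition yields the analogous conclusion for $2$-dimensional faces of $P^*$, equivalently for $3$-dimensional faces of $P$.

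For the other dimensions I would exploit that $P=\Conv(\nabla_1\times e_1,\nabla_2\times e_2)$ lies in the affine hyperplane $\{x_{n+1}+x_{n+2}=1\}$ with $x_{n+1},x_{n+2}\in[0,1]$. Hence every lattice point of $P$ has last two coordinates $(1,0)$ or $(0,1)$ and lies in one of the two pure slices $\nabla_i\times e_i$. It follows that for any \emph{mixed} face $y$ of $P$ (one not contained in a single slice), all lattice points of $y$ lie in the proper sub-faces $y\cap(\nabla_i\times e_i)$, forcing $\ell^*(y)=0$. The same argument applies to $P^*$ and its slices $\Delta_j\times e_j$, so $\ell^*(y)\cdot\ell^*(y^\vee)=0$ is immediate whenever either $y$ or $y^\vee$ is mixed.

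The remaining case is when both $y$ and $y^\vee$ are pure. For $\dim y\in\{0,5\}$ this amounts to showing $\ell^*(\nabla_i)=\ell^*(\Delta_j)=0$ for each $i,j$. Granted this vanishing, relation \eqref{h33is1} collapses: its right-hand side becomes zero while its left-hand side is a non-negative sum over pure edges, which therefore vanishes termwise, giving $\dim y=1$; the mirror of \eqref{h33is1} handles $\dim y=4$. The main obstacle is thus the vanishing $\ell^*(\Delta_i)=\ell^*(\nabla_j)=0$ for indecomposable nef partitions---this is where indecomposability enters essentially, and I expect it to require a direct combinatorial argument about nef partitions (for instance, showing that an interior lattice point of $\Delta_i$ produces a reflexive polytope in the sublattice generated by $V_i$, contradicting indecomposability), rather than further use of the corollary.
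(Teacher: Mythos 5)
Your strategy coincides with the paper's own proof in all its essentials: the paper likewise extracts the case $\dim y=2$ termwise from the non-negative sum in \eqref{h23is0}, gets $\dim y=1$ from \eqref{h33is1} once the right-hand side is known to vanish, handles faces meeting both slices via the Cayley structure (the paper phrases this as the projection of $P^*$ onto the $\RR^2$ factor, whose image is the segment from $(1,0)$ to $(0,1)$ with no relatively interior lattice points --- equivalent to your observation that every lattice point of a mixed face lies in one of the two proper slice faces), and disposes of the remaining dimensions by duality, using that the dual nef partition is again indecomposable.

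The one genuine gap is exactly the step you yourself flag: the vanishing $\ell^*(\Delta_j)=\ell^*(\nabla_i)=0$ for the slice facets of $P^*$ and $P$ when the nef partition is indecomposable. You say you ``expect'' this to follow from a direct combinatorial argument but do not supply one, and the mechanism you sketch (an interior lattice point of $\Delta_i$ produces a reflexive polytope in the sublattice generated by $V_i$, contradicting indecomposability) is not immediate: a lattice polytope with an interior lattice point need not be reflexive, and establishing this implication for the polytopes of a nef partition requires the reflexive Gorenstein cone machinery. The paper closes precisely this step by citation: indecomposability implies that the slices have no interior lattice points by \cite{BatyrevNill2007}, Corollary~6.12. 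With that reference in hand your argument is complete and is essentially the published proof; the only structural difference is that the paper economizes by running the pure/mixed dichotomy only for the facets of $P^*$ dual to vertices of $P$, after which \eqref{h33is1}, \eqref{h23is0} and symmetry cover everything, whereas you also invoke the mixed-face argument in the middle dimensions where the corollary already suffices.
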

\begin{proof}
First, let $y$ be a vertex. Then $\ell^*(y)=1$ and we need to show that $\ell^*(y^\vee)=0$. Note that $y^\vee$ is a $5$-dimensional facet of $P^*$. Then either $y^\vee$ is one of the polytopes $\nabla_1$ or $\nabla_2$ of the dual nef partition and it does not have an interior point, since the nef partition is indecomposable (\cite{BatyrevNill2007}, Corollary~6.12), or $y^\vee$ has non-empty intersection with both $\nabla_1$ and $\nabla_2$. In the latter case consider the projection of $N_\RR\times\RR^2\supset P^*$ onto the second factor. Then the image of $y^\vee$ is the line segment from $(1,0)$ to $(0,1)$, which does not have interior points. Therefore, in any case $\ell^*(y^\vee)=0$ as desired.

Now from relations \eqref{h33is1} and \eqref{h23is0} we conclude the result for $\dim y\leqslant 2$, but then for $\dim y\geqslant 3$ it follows by symmetry.
\end{proof}

\begin{remark}
Relation~\eqref{h32is0} follows from Lemma~\ref{lsylsyvis0} and Pick's formula. Indeed, let $y$ be a face of $P$ of dimension $2$, such that $\ell^*(y^\vee)\neq 0$, then we know that $\ell^*(y)=0$. Then the area of $y$ is $A(y)=\ell(y)/2-1$ and
\begin{align*}
 A(2\cdot y)
&=\ell^*(2\cdot y) + \dfrac{\ell(2\cdot y)-\ell^*(2\cdot y)}{2} - 1
=4A(y)=2\ell(y)-4,
\end{align*}
but the number of boundary points of $2\cdot y$ is $2\ell(y)$, thus  $\ell^*(2\cdot y)=\ell(y)-3$.
\end{remark}

\begin{theorem} \label{thm:h11cispecial}
Let $\Delta$ be a five-dimensional reflexive polytope. Let $X\subset{\mathbb{P}}_\Delta$ be a complete intersection Calabi-Yau threefold corresponding to a fixed \emph{indecomposable} nef partition of $\Delta$ with associated dual Cayley polytope $P$. Then
\begin{alignat*}{3}
h^{1,1}(X)\ 
&&=&\ 
\ell(P^*)-7
-\sum_{\dim y=0} \ell^*(2\cdot y^\vee)\ 
&+&\sum_{\dim y=1}\ell^*(y^\vee)\\
&&+&\sum_{\dim y=1}\ell^*(y)\cdot \ell^*(2\cdot y^\vee)
&-&\sum_{\sumfrac{\dim x=2}{\sumfrac{\dim y=3}{x<y}}} \ell^*(x)\cdot \ell^*(y^\vee)\\
&&-&\sum_{\dim y=2}\ell^*(2\cdot y)\cdot \ell^*(y^\vee)
&+&\sum_{\dim y=3}\ell^*(2\cdot y)\cdot \ell^*(y^\vee).
\end{alignat*}
where sums run over faces of $P$ of indicated dimensions.
\end{theorem}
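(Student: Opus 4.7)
The plan is to obtain Theorem~\ref{thm:h11cispecial} by simplifying the generic formula from Theorem~\ref{thm:h11cigeneric} term-by-term, using Lemma~\ref{lsylsyvis0} (and hence the indecomposability hypothesis) together with relation~\eqref{h32is0} from the corollary that follows it. The key input is the vanishing $\ell^*(y)\cdot\ell^*(y^\vee)=0$ for every face $y$ of $P$, which makes several of the terms in Theorem~\ref{thm:h11cigeneric} collapse.

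First I would handle the vertex sum. For a vertex $y$ we have $\ell^*(y)=1$, so Lemma~\ref{lsylsyvis0} gives $\ell^*(y^\vee)=0$ for every vertex. Consequently $\sum_{\dim y=0}\ell^*(y^\vee)=0$, and the term $-\sum_{\dim y=0}[\ell^*(2\cdot y^\vee)-6\cdot\ell^*(y^\vee)]$ reduces to $-\sum_{\dim y=0}\ell^*(2\cdot y^\vee)$. The same lemma immediately kills $-5\sum_{\dim y=1}\ell^*(y)\cdot\ell^*(y^\vee)$ inside the edge sum and $-4\sum_{\dim y=3}\ell^*(y)\cdot\ell^*(y^\vee)$ inside the $3$-face sum, leaving exactly the two simplified summands $\sum_{\dim y=1}\ell^*(y)\cdot\ell^*(2\cdot y^\vee)$ and $\sum_{\dim y=3}\ell^*(2\cdot y)\cdot\ell^*(y^\vee)$ of Theorem~\ref{thm:h11cispecial}.

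The only slightly less mechanical step is the $2$-face sum. I would split
\begin{align*}
-\sum_{\dim y=2}[\ell(y)-\ell^*(y)-3]\cdot\ell^*(y^\vee)
=-\sum_{\dim y=2}[\ell(y)-3]\cdot\ell^*(y^\vee)
+\sum_{\dim y=2}\ell^*(y)\cdot\ell^*(y^\vee),
\end{align*}
where the second summand vanishes by Lemma~\ref{lsylsyvis0}. Applying relation~\eqref{h32is0} of the corollary then converts $\sum_{\dim y=2}[\ell(y)-3]\ell^*(y^\vee)$ into $\sum_{\dim y=2}\ell^*(2\cdot y)\cdot\ell^*(y^\vee)$, producing the required $-\sum_{\dim y=2}\ell^*(2\cdot y)\cdot\ell^*(y^\vee)$ term.

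Putting the pieces back together recovers the formula of Theorem~\ref{thm:h11cispecial}. There is essentially no obstacle beyond careful bookkeeping: each simplification is one direct invocation of either Lemma~\ref{lsylsyvis0} or identity~\eqref{h32is0}, and the $\ell(P^*)-7$, $\sum_{\dim y=1}\ell^*(y^\vee)$, and mixed $\dim x=2,\dim y=3$ summands carry over unchanged from Theorem~\ref{thm:h11cigeneric}.
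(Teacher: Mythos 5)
Your proposal is correct and matches the paper's own (one-line) proof, which derives the theorem from Theorem~\ref{thm:h11cigeneric} and Lemma~\ref{lsylsyvis0}; you have simply filled in the routine bookkeeping. The only cosmetic difference is that for the $2$-face term you invoke the summed identity~\eqref{h32is0}, whereas the paper's remark after Lemma~\ref{lsylsyvis0} obtains the same conversion face-by-face via Pick's formula; both are valid consequences of indecomposability.
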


\begin{proof}
Follows from Theorem~\ref{thm:h11cigeneric} and Lemma~\ref{lsylsyvis0}.
\end{proof}

\section{Relations with other results}

It would be desirable to have a geometric interpretation for \emph{each term} of the obtained expressions for Hodge numbers and, in particular, to be able to identify the toric component of $h^{1,1}(X)$, given by images of the toric-invariant divisors of the ambient space or, equivalently, the polynomial part of $h^{2,1}(X)$, corresponding to polynomial deformations of the complete intersection in the ambient space. (In the hypersurface case this extra information follows ``for free'' from the proof of Batyrev's formulas for the Hodge numbers.) While there is an algorithm for computing the toric part of the cohomology ring (see~\cite{BKOS2007}, for example), it does not give directly a ``closed form'' expression for its dimension. Also Borisov and Mavlyutov have constructed complete stringy cohomology spaces in~\cite{BorisovMavlyutov2003} for semiample hypersurfaces in toric varieties and perhaps their techniques may be used in complete intersection case as well.

It would also be interesting to compare the result of Theorem~\ref{thm:h11cispecial} with the previously known formulas for Hodge numbers of complete intersections obtained by Batyrev and Borisov in~\cite{BatyrevBorisov1996b}. They have considered a special case when all divisors corresponding to the nef partition in the non-resolved variety (i.e. the variety corresponding to $\Delta$ without triangulation of the boundary) are ample.\footnote{In~\cite{DoranMorgan2007} the first author and John Morgan relate the closed form expressions of~\cite{BatyrevBorisov1996b} directly to the (mixed) Hodge structure on middle-dimensional cohomology for complete intersection Calabi-Yau threefolds in toric varieties.  A generalization that allows for geometric interpretations for each term and identification of the toric components of $h^{1,1}(X)$ would be most useful for such applications.} Their formulas restricted to our case are given below, although we were not yet able to match all terms with ours.

\begin{definition}
A lattice polytope $\Delta'$ is a Minkowski summand of another lattice polytope $\Delta$ if there exist $\mu\in\ZZ_{>0}$ and a lattice polytope $\Delta''$ such that $\mu\Delta=\Delta'+\Delta''$.
\end{definition}

If the divisors given by polytopes $\nabla_i$ are ample in $\PP_\Delta$ (before partial resolution corresponding to a triangulation of $\partial\Delta$), then $\Delta^\circ$ is a Minkowski summand of $\nabla_i$ for all $i$, all these polytopes are combinatorially equivalent, and each face $\theta$ of $\Delta^\circ$ decomposes into Minkowski sum $\theta=\sum_i \theta_i$, where $\theta_i$ is a face of $\nabla_i$ of the same dimension as $\theta$. In this case, the nef-partition is necessarily irreducible and Theorem~\ref{thm:h11cispecial} is applicable. Another way to compute Hodge numbers in this case is the following result.

\begin{corollary}[from Corollary~8.4\cite{BatyrevBorisov1996b}]
Let $\Delta$ be a five-dimensional reflexive polytope. Let $X\subset{\mathbb{P}}_\Delta$ be a complete intersection Calabi-Yau threefold corresponding to a fixed nef partition of $\Delta$ with ample divisors corresponding to polytopes $\nabla_i$ of the dual nef partition. Then
\begin{align*}
\allowdisplaybreaks
h^{1,1}(X)
&=\ell(\Delta)-6
-\sum_{\dim \theta=4} \ell^*(\theta)
-\sum_{\dim \theta=3} \ell^*(\theta)\\
&+\sum_{\dim \theta=2} \ell^*(\theta) \cdot
\big[\ell^*(\theta^*) - \ell^*(\theta_1^*) - \ell^*(\theta_2^*)\big],\displaybreak[0]\\ 
h^{2,1}(X)
&=\big[\ell^*(2\nabla_1+\nabla_2)-\ell^*(2\nabla_1)
+\ell^*(\nabla_1+2\nabla_2)-\ell^*(2\nabla_2) \big] - 7\\
&-\sum_{\dim \theta=0} \big[\ell^*(\theta^*) - \ell^*(\theta_1^*) - \ell^*(\theta_2^*)\big]\\
&+\sum_{\dim \theta=1} \ell^*(\theta) \cdot
\big[\ell^*(\theta^*) - \ell^*(\theta_1^*) - \ell^*(\theta_2^*)\big],
\end{align*}
where the sums are over faces of $\Delta$ of indicated dimensions, $\theta^*$ is the face of $\Delta^\circ$ dual to $\theta$, and $\theta^*=\theta_1^*+\theta_2^*$ is the decomposition into Minkowski sum with $\theta_i^*$ being a face of $\nabla_i$.
\end{corollary}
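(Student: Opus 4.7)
The plan is to specialize Batyrev-Borisov's Corollary~8.4 of~\cite{BatyrevBorisov1996b} to the case $n=5$, $r=2$, which is exactly the setting of a Calabi-Yau threefold cut out by two nef hypersurfaces in a five-dimensional toric variety. Their formula expresses $h^{p,q}$ of a generic complete intersection of $r$ ample divisors as a double sum indexed by faces $\theta$ of $\Delta$ together with the Minkowski decomposition $\theta^* = \theta_1^* + \cdots + \theta_r^*$ of the dual face. This decomposition is available precisely under the ampleness hypothesis: once $\Delta^\circ$ is a Minkowski summand of each $\nabla_i$, all the $\nabla_i$ are forced to be combinatorially equivalent to $\Delta^\circ$, so every face of $\Delta^\circ$ inherits a canonical decomposition into matching faces of the $\nabla_i$.

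First I would substitute $n=5$, $r=2$ and Calabi-Yau dimension $3$ into their general expressions for $h^{1,1}$ and $h^{2,1}$. A large fraction of the terms in their double summation vanish immediately for dimensional reasons: faces whose $\ell^*$-factors are forced to be zero by being too small relative to their lattice, or summand indices that fall outside the allowed window. What remains then reorganizes into the four (respectively, three) sums in the statement, indexed by $\dim\theta=0,1,2,3,4$ for $h^{1,1}$ and by $\dim\theta=0,1$ for $h^{2,1}$.

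Second, I would verify the constant/leading term in each case. For $h^{1,1}$ this comes out to $\ell(\Delta)-6$, where the $-6$ arises from the six torus-invariant relations among divisors in the ambient $5$-dimensional toric variety. For $h^{2,1}$ the corresponding ``seed'' is the mixed lattice-point combination $\ell^*(2\nabla_1+\nabla_2) + \ell^*(\nabla_1+2\nabla_2) - \ell^*(2\nabla_1) - \ell^*(2\nabla_2) - 7$; this is what Batyrev-Borisov's generating-function machinery assigns to the ``full-dimensional'' face of the dual nef partition, and the $-7$ reflects the dimension count of polynomial deformations modulo the action of automorphisms of $\PP_\Delta$.

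The main obstacle will be bookkeeping rather than substance. Batyrev-Borisov phrase their result using faces of cones and slightly different sign and duality conventions than we use in Section~\ref{sec:notation}, so care is needed to translate between ``face $\theta$ of $\Delta$ with dual $\theta^*$ in $\Delta^\circ$'' and their cone-based indexing, and to confirm that the face dimensions being summed over match under the dualization $\theta \mapsto \theta^*$. Once the notational dictionary is set, however, no genuinely new combinatorial identity is required: the stated formulas are a direct dimensional specialization of the published result.
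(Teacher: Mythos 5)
Your proposal is correct and matches the paper's approach exactly: the paper likewise proves this by citing Corollary~8.4 of Batyrev--Borisov and restricting to $\dim\Delta=5$, $\dim X=3$. The additional bookkeeping you describe (translating cone-based indexing, checking the constant terms) is exactly the routine specialization the paper leaves implicit.
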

\begin{proof}
Follows from Corollary~8.4 in~\cite{BatyrevBorisov1996b}, after restricting to $\dim\Delta=5$ and ${\dim X=3}$.
\end{proof}

\section{Examples}

In this section we apply our formula to several simplices and non-simplices $\Delta$ with the corresponding $h^{1,1}(X)$ equal and not equal to 1. The reflexive polytopes considered here were taken from the data supplements to~\cite{KKRS2005}.\footnote{Available at {\tt http://hep.itp.tuwien.ac.at/$\sim$kreuzer/CY/hep-th/0410018.html}}

\begin{example}[Simplex, $h^{1,1}(X)=1$]
Let vertices of $\Delta$ be given by columns of the matrix
\begin{align*}
\begin{pmatrix}
0 & -3 & 0 & 1 & 0 & 0 \\
0 & -3 & 1 & 0 & 0 & 0 \\
0 & -2 & 0 & 0 & 0 & 1 \\
0 & -2 & 0 & 0 & 1 & 0 \\
1 & -1 & 0 & 0 & 0 & 0
\end{pmatrix}
\end{align*}
and consider the nef partition corresponding to $V_1=\set{1,3,5}$ and $V_2=\set{2,4,6}$. Applying Theorem~\ref{thm:h11cispecial}, we get (each sum is written as a separate term)
\begin{align*}
h^{1,1}(X) &= 8 - 7 - 0 + 0 + 0 - 0 - 0 + 0 = 1,\\
h^{2,1}(X) &= 98 - 7 - 30 + 0 + 0 - 0 - 0 + 0 = 61.
\end{align*}
\end{example}

\begin{example}[Non-simplex, $h^{1,1}(X)=1$]
Let vertices of $\Delta$ be given by columns of the matrix
\begin{align*}
\begin{pmatrix}
1 & 1 & -1 & 0 & 0 & 0 & 1 \\
1 & -3 & 0 & 1 & 0 & 0 & 0 \\
0 & -1 & 0 & 0 & 1 & 0 & 0 \\
1 & -2 & 0 & 0 & 0 & 1 & 0 \\
2 & -2 & 0 & 0 & 0 & 0 & 0
\end{pmatrix}
\end{align*}
and consider the nef partition corresponding to $V_1=\set{1,2,5,6}$ and $V_2=\set{3,4,7}$. Applying Theorem~\ref{thm:h11cispecial}, we get (each sum is written as a separate term)
\begin{align*}
h^{1,1}(X) &= 9 - 7 - 1 + 0 + 0 - 0 - 0 + 0 = 1,\\
h^{2,1}(X) &= 54 - 7 - 14 + 0 + 5 - 0 - 1 + 0 = 37.
\end{align*}
\end{example}

\begin{example}[Non-simplex, $h^{1,1}(X)\neq 1$]
Let vertices of $\Delta$ be given by columns of the matrix
\begin{align*}
\begin{pmatrix}
-1 & 1 & 0 & 0 & 0 & 1 & -2 & -1 \\
-1 & 0 & 1 & 0 & 0 & 0 & -1 & 0 \\
0 & 1 & 0 & 1 & 0 & 0 & -1 & 0 \\
-1 & 1 & 0 & 0 & 1 & 0 & -1 & 0 \\
-2 & 2 & 0 & 0 & 0 & 0 & 0 & 0
\end{pmatrix}
\end{align*}
and consider the nef partition corresponding to $V_1=\set{1,3,7}$ and $V_2=\set{2,4,5,6,8}$. Applying Theorem~\ref{thm:h11cispecial}, we get (each sum is written as a separate term)
\begin{align*}
h^{1,1}(X) &= 10 - 7 - 1 + 0 + 0 - 0 - 0 + 0 = 2,\\
h^{2,1}(X) &= 46 - 7 - 11 + 0 + 1 - 0 - 0 + 1 = 30.
\end{align*}
\end{example}

\begin{example}[Simplex, $h^{1,1}(X) \neq 1$]
Let vertices of $\Delta$ be given by columns of the matrix
\begin{align*}
\begin{pmatrix}
-1 & 0 & 0 & 0 & 1 & 0 \\
-1 & 0 & 1 & 0 & 0 & 0 \\
-1 & 0 & 0 & 1 & 0 & 0 \\
-2 & 1 & 0 & 0 & 0 & 1 \\
-2 & 2 & 0 & 0 & 0 & 0
\end{pmatrix}
\end{align*}
and consider the nef partition corresponding to $V_1=\set{2,6}$ and $V_2=\set{1,3,4,5}$. Applying Theorem~\ref{thm:h11cispecial}, we get (each sum is written as a separate term)
\begin{align*}
h^{1,1}(X) &= 9 - 7 - 0 + 0 + 0 - 0 - 0 + 0 = 2,\\
h^{2,1}(X) &= 83 - 7 - 27 + 0 + 10 - 0 - 1 + 0 = 58.
\end{align*}
\end{example}

\begin{remark}
As can be seen from these and a few other examples, none of the terms in our formula vanishes identically.
\end{remark}

\providecommand{\bysame}{\leavevmode\hbox to3em{\hrulefill}\thinspace}
\providecommand{\MR}{\relax\ifhmode\unskip\space\fi MR }
\providecommand{\MRhref}[2]{%
  \href{http://www.ams.org/mathscinet-getitem?mr=#1}{#2}
}
\providecommand{\href}[2]{#2}

\end{document}